\newcommand{\ra}{{\rightarrow}}
\renewcommand{\H}{\mathbb{H}}
\newcommand{\R}{\mathbb{R}}
\newcommand{\C}{\mathbb{C}}
\newcommand{\Z}{\mathbb{Z}}
\newcommand{\N}{\mathbb{N}}
\def\pslc {\mathrm{PSL}(2,\C)}
\def\pslr {\mathrm{PSL}(2,\R)}
\theoremstyle{plain}
\newtheorem{thm}{Theorem}[section]
\newtheorem{lem}[thm]{Lemma}
\newtheorem{dfn}[thm]{Definition}
\numberwithin{equation}{section}
\DeclareMathOperator*{\esssup}{ess\,sup}
\begin{document}
\title{Singular measures on the limit set of a Kleinian group}
       
        \author[Woojin Jeon]{Woojin Jeon}
        \maketitle

\begin{abstract}
We consider a finitely generated torsion free Kleinian group $H$ and a random walk on $H$ with respect to a symmetric nondegenerate probability measure $\mu$ with finite support.
When $H$ is geometrically infinite without parabolics or when $H$ is Gromov hyperbolic with parabolics, we prove that the Patterson-Sullivan measure is singular with respect to the harmonic measure coming from $\mu$.
\end{abstract}
\footnotetext[1]{School of Mathematics KIAS, Hoegiro 87, Dongdaemun-gu, Seoul, 130-722, Korea \\ email : jwoojin@kias.re.kr}
\footnotetext[2]{2010 {\sl{Mathematics Subject Classification.}}
51M10, 57S25, 60J50} \footnotetext[3]{{\sl{Key words and phrases.}}
Kleinian group, Cannon-Thurston map, Poisson boundary, harmonic measure, Patterson-Sullivan measure}
\section{Introduction}

Let $H$ be a finitely generated {\it Kleinian group}, i.e., a discrete subgroup of $\pslc$. We assume that $H$ has no elliptic elements. Since $\pslc$ is the group of orientation preserving isometries of the hyperbolic 3-space $\H^3$, we can consider the action of $H$ on $\H^3$ which can be continuously extended to the ideal boundary 
$S^2_\infty$ of $\H^3$. The {\it limit set} $\Lambda_H$ of $H$ is the set of limit points of an orbit $H \cdot o$ where $o\in\H^3$. This definition does not depend on the choice of $o$ and $\Lambda_H$ is contained in $S^2_\infty$. 

We consider two measures on $\Lambda_H$. One is the Patterson-Sullivan measure $\rho_x$ based at $x\in \H^3$ and the other is the harmonic measure $\nu_H$ coming from a random walk on $H$. Given a probability measure $\mu$ on $H$ and the corresponding random walk $\{Y_n\}$ on $H$, we can describe $\nu_H$ as follows.
For $A\subset S^2_\infty$, $\nu_H(A)$ is the probability for $Y_n \cdot o$ to converge to a point in $A\subset S^2_\infty$. 
When $H$ is a {\it Fuchsian group} i.e., a discrete subgroup of $\pslr$, $H$ acts isometrically on $\H^2$ and Y. Guivarc'h and Y. Le Jan proved the following.

\begin{thm}\label{Fuchsian}(Y. Guivarc'h and Y. Le Jan \cite{GL})
Let $H$ is a discrete subgroup of $\pslr$ such that $\H^2/H$ is a noncompact surface with finite area.
Let $\nu_H$ is the harmonic measure on $S^1_\infty$ coming from a symmetric nondegenerate probability measure $\mu$ with finite support on $H$. Then $\nu_H$ is singular with respect to the Lebesgue measure on $S^1_\infty$.
\end{thm}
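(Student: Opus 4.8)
The plan is to reduce the statement to a strict form of Guivarc'h's fundamental inequality and to locate the strictness in the cusps. Since $\H^2/H$ has finite area, $H$ is a nonuniform lattice, so its critical exponent is $\delta = 1$ and the Patterson--Sullivan density $\rho_o$ lies in the Lebesgue class on $S^1_\infty$; hence it is enough to prove $\nu_H \perp \rho_o$. Both $\rho_o$ and $\nu_H$ are $H$-quasi-invariant, and the $H$-action is ergodic for each (for $\rho_o$ by the Hopf argument / ergodicity of the geodesic flow in finite volume, for $\nu_H$ by ergodicity of the $\mu$-boundary). Two $H$-quasi-invariant ergodic measures are either equivalent or mutually singular, so the problem becomes: rule out $\nu_H \sim \rho_o$.

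Next I would compare Radon--Nikodym cocycles. The cocycle of the conformal measure is geometric, $\log\frac{d g^{-1}_*\rho_o}{d\rho_o}(\xi) = -\delta\, b_\xi(g^{-1}\cdot o,o)$ with $b$ the Busemann cocycle, while the cocycle of $\nu_H$ is $\log\frac{dg^{-1}_*\nu_H}{d\nu_H}$. If $\nu_H \sim \rho_o$ these two differ by a coboundary; integrating the coboundary relation against the stationary measure $\mu\times\nu_H$ and invoking Furstenberg's entropy formula (which evaluates the $\nu_H$-cocycle to the Avez entropy $h=\lim_n-\tfrac1n\log\mu^{*n}(Y_n)$, valid since $(S^1_\infty,\nu_H)$ is the Poisson boundary) together with the drift formula (which evaluates the geometric cocycle to $\delta\,\ell$, where $\ell=\lim_n\tfrac1n d_{\H^2}(o,Y_n\cdot o)$) yields $h=\delta\,\ell=\ell$. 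Guivarc'h's fundamental inequality gives $h\le\delta\,\ell=\ell$ in any case, so equivalence is exactly the equality case. Equivalently, $\nu_H$ is exact dimensional with $\dim_H\nu_H=h/\ell$ while $\dim_H\rho_o=\delta=1$, and equivalence would force these dimensions to agree. Everything thus reduces to the strict inequality $h<\ell$.

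Proving $h<\ell$ is where the noncompactness is indispensable and is the main obstacle. Here $h$ is the drift of the walk in the Green metric $d_G(o,g\cdot o)=-\log F(o,g\cdot o)$, which for a symmetric finitely supported walk on the (virtually free, hence word hyperbolic) group $H$ is comparable to the word metric, whereas $\ell$ is the drift in the honest hyperbolic metric. The two metrics are quasi-isometric on the thick part but decouple sharply inside a cusp: for $p$ generating a parabolic subgroup one has $d_G(o,p^n\cdot o)\asymp|n|$ while $d_{\H^2}(o,p^n\cdot o)=2\log|n|+O(1)$. Because the walk is positively recurrent to the cusp neighbourhoods and makes genuine excursions along the parabolic directions, this logarithmic compression of hyperbolic length relative to Green length during excursions prevents the two drifts from coinciding, giving $h<\ell$ and hence $\nu_H\not\sim\rho_o$; by the dichotomy of the first paragraph, $\nu_H\perp\rho_o$ and therefore $\nu_H\perp\mathrm{Leb}$.

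The genuinely delicate point, which I expect to absorb most of the work, is making the cusp contribution quantitative: one must control both the frequency and the typical length of the walk's excursions into each cusp and show that, measured against the word/Green metric, they produce a strictly positive defect in $\ell-h$. This is precisely where the finite-area, lattice-with-parabolics hypothesis enters, and it is the random-walk counterpart of Sullivan's logarithm law for cusp excursions of the geodesic flow.
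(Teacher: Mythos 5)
First, note what you are being compared against: the paper does not prove Theorem~\ref{Fuchsian} directly --- it quotes it from \cite{GL} and instead proves the generalization Theorem~\ref{main}, whose specialization to a finite-area Fuchsian lattice (where $\delta_H=1$ and the Patterson--Sullivan measure is in the Lebesgue class on $S^1_\infty$) recovers the statement. Measured against that proof, your first two reduction steps are essentially sound: the ergodic dichotomy (equivalent or mutually singular), and the fact that equivalence $\nu_H\sim\rho_o$ forces the entropy--drift equality $h=\delta\ell=\ell$, are standard, modulo a caveat you gloss over --- when you ``integrate the coboundary relation'' you need $\log J$ to be integrable (or a truncation/bounded-density argument), which is not automatic if the Radon--Nikodym derivative $J$ is unbounded. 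But the genuine gap is exactly where you park it: the strict inequality $h<\ell$ is the entire content of the theorem, and your justification of it is a heuristic, not an argument. Both $h$ and $\ell$ are averaged, almost-sure limiting quantities; the walk spends an asymptotically negligible fraction of its time in any parabolic coset, and exponential distortion along a sparse family of group elements does not by itself produce a positive defect in $\ell-h$ --- indeed, whether equality $h=\delta\ell$ holds is delicate even in the cocompact, undistorted setting, so ``the walk makes genuine cusp excursions, hence the drifts differ'' cannot stand without the quantitative excursion control you explicitly defer.

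The instructive contrast is that the paper converts precisely the observation you rely on --- for a parabolic $g$, $d_G(id,g^n)\asymp\vert g^n\vert\asymp n$ while $d_{\H^2}(o,g^n\cdot o)=2\log n+O(1)$ --- into a complete proof with no drift averaging and no excursion statistics. Assuming $\nu_H\sim\rho_o$, it uses ergodicity of the $H$-action on the double boundary with respect to $\widetilde\nu_H$, together with $H$-invariance of both square measures $\widetilde\rho_o$ and $\widetilde\nu_H$, to force the density $d\rho_o/d\nu_H$ to be essentially bounded above and below (the trick of \cite[Prop.~5.4]{BHM}); comparing the conformal Busemann cocycle with the Green-metric Busemann cocycle then yields a \emph{uniform pointwise} bound $\delta_H\, d_{\H^2}(o,h\cdot o)\ge d_G(id,h)-C$ valid for \emph{every} $h\in H$, which the single family $h_n=g^n$ from Lemma~\ref{exp} already contradicts. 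So to repair your plan you should either replace the entropy/drift integration by this uniform-cocycle-bound step, or commit to proving $h<\ell$ honestly (exact dimensionality plus a strictness theorem, or the cusp-winding analysis of \cite{GL} itself), which is exactly the heavy quantitative work your last paragraph postpones.
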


In this paper, 
we generalize Theorem \ref{Fuchsian} to Kleinian groups as follows.

\begin{thm}\label{main}
Suppose that $H$ is a finitely generated Kleinian group such that 
\begin{enumerate}
\item $H$ is Gromov hyperbolic.
\item $H$ is not convex-cocompact.
\item The orbit map $\tau_o:H \ra \H^3$ sending $h$ to $h\cdot o$ extends continuously to $\widehat\tau_o:\partial H \ra S^2_\infty$.
\end{enumerate}
Here $\partial H$ is the Gromov boundary of $H$.
Let $\nu_H$ be the harmonic measure on $\Lambda_H$ coming from a symmetric nondegenerate probability measure $\mu$ with finite support on $H$. Then 
$\nu_H$ is singular with respect to the Patterson-Sullivan measure on $\Lambda_H$.
\end{thm}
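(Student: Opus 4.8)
The plan is to prove a dichotomy and then eliminate one alternative. First I would show that $\nu_H$ and the Patterson--Sullivan measure $\rho$ are either mutually singular or mutually equivalent, and then show that equivalence is impossible under hypotheses (1)--(3). The dichotomy follows from a standard Lebesgue-decomposition argument: both $\nu_H$ and $\rho$ are $H$-quasi-invariant measures on $\Lambda_H$, so the support of the absolutely continuous part of $\nu_H$ with respect to $\rho$ is $H$-invariant up to null sets; since the $H$-action on $\Lambda_H$ is ergodic for these measures (Sullivan-type ergodicity for the Patterson--Sullivan measure of a divergence-type group, and triviality of the tail $\sigma$-algebra for the harmonic measure), this support is either null or co-null. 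Here I would invoke topological tameness of finitely generated Kleinian groups to guarantee that $H$ is of divergence type and that the relevant ergodicity holds.

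Second, I would rule out equivalence by passing to the Green metric. Writing $d_G(x,y)=-\log F(x,y)$ for the Green metric associated to $\mu$, the work of Blach\`ere--Ha\"issinsky--Mathieu shows that $(H,d_G)$ is Gromov hyperbolic, that $d_G$ is quasi-isometric to the word metric $d_w$ (finite support forces linear drift), and, crucially, that $\nu_H$ is the conformal density of exponent $1$ for $d_G$. On the other hand $\rho$ is by construction the conformal density of exponent $\delta$, the critical exponent of $H$, for the orbit metric $d_{\H^3}(o,h\cdot o)$. Hypothesis (3), the existence of the Cannon--Thurston map $\widehat\tau_o:\partial H\ra\Lambda_H$, is precisely what identifies the Gromov boundary of $(H,d_w)$ with the boundary of $(H,d_{\H^3})$, so that the two densities live on a common boundary and can be compared. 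The rigidity criterion of that theory then yields: $\nu_H\sim\rho$ forces the two hyperbolic metrics to be roughly similar, i.e. $|d_G(e,h)-\delta\, d_{\H^3}(o,h\cdot o)|\le C$ for all $h\in H$.

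Third, I would extract the contradiction. Because $d_G$ is quasi-isometric to $d_w$, rough similarity of $d_G$ and $\delta\, d_{\H^3}$ implies that $d_w$ is quasi-isometric to $d_{\H^3}$ along the orbit, i.e. that the orbit map $\tau_o$ is a quasi-isometric embedding. For a finitely generated Kleinian group this is equivalent to $H$ being convex-cocompact, contradicting hypothesis (2). Hence equivalence cannot occur, and the dichotomy of the first step forces $\nu_H\perp\rho$, which is the assertion of the theorem.

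The main obstacle is the rigidity step in its two-metric form. The Blach\`ere--Ha\"issinsky--Mathieu theorem is stated for a single hyperbolic metric (the word metric) together with its Green metric, whereas here $\rho$ is conformal for the genuinely different metric $d_{\H^3}$, and the Cannon--Thurston map relating the two boundaries is typically highly non-injective and far from bi-Lipschitz; transporting the conformal-density comparison across $\widehat\tau_o$ therefore requires real care. Concretely, I expect the delicate points to be (a) checking that $\rho$, viewed on $\partial H$ through $\widehat\tau_o$, remains a bona fide $\delta$-conformal density for $d_{\H^3}$ despite the collapsing of the map, and (b) showing that equivalence genuinely forces quasi-isometry of $d_w$ and $d_{\H^3}$ rather than merely a weaker comparison, since it is this conclusion, in conjunction with non-convex-cocompactness, that closes the argument.
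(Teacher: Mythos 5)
Your skeleton matches the paper's (assume the measures are not singular, deduce a comparison between the Green metric $d_G$ and $\delta_H d_{\H^3}(o,h\cdot o)$, contradict non-convex-cocompactness), but the two load-bearing steps are deferred rather than carried out, and one of them rests on a false claim. Your dichotomy step invokes tameness ``to guarantee that $H$ is of divergence type'': by the very fact quoted in the paper, a Kleinian group is divergent if and only if it is geometrically finite or $\Lambda_H=S^2_\infty$, so in the central case of a geometrically infinite group with proper limit set (e.g.\ a singly degenerate surface group, which satisfies hypotheses (1)--(3)) $H$ is of \emph{convergence} type and Sullivan-type ergodicity of $\rho_o$ is unavailable. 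The paper is structured precisely to avoid this: it uses only the $H$-\emph{invariance} of the Busemann-cocycle measure $\widetilde\rho_o$ on $S^2_\infty\times S^2_\infty\backslash\Delta$ (explicitly noting it may fail to be ergodic), and imports all ergodicity from the harmonic side, namely the measure $\widetilde\nu_H$ built from the Green--Gromov product, which is ergodic by Kaimanovich \cite{Kai3} because $\partial H$ coincides with its conical limit set.

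Your second step is the genuine gap. The ``rigidity criterion'' of \cite{BHM} (equivalence of measures iff rough similarity of metrics) is proved for two quasi-isometric left-invariant hyperbolic metrics on the same group, with both measures on the same boundary; here $h\mapsto d_{\H^3}(o,h\cdot o)$ is \emph{not} quasi-isometric to $d_w$ and the boundaries are related only by the non-injective map $\widehat\tau_o$, so it cannot be invoked as a black box---you flag this obstacle yourself but do not resolve it. The resolution is the heart of the paper's proof: the claim that $d\rho_o/d\nu_H$ is essentially bounded above and below, proved by comparing $\widetilde\rho_o$ and $\widetilde\nu_H$ via the ergodicity above, and using continuity of $\widehat\tau_o$ to bound $(\xi\vert\xi')^G_{id}$ uniformly when $\widehat\tau_o\xi,\widehat\tau_o\xi'$ lie in separated spherical balls. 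Moreover the two-sided rough similarity $\vert d_G(id,h)-\delta_H d_{\H^3}(o,h\cdot o)\vert\leq C$ you aim for is not obtainable this way: since $\Lambda_H$ may be a proper subset of $S^2_\infty$, one only has $\sup_{\eta\in\Lambda_H}b_{o,\eta}(h^{-1}\cdot o)\leq d_{\H^3}(o,h^{-1}\cdot o)$ with no matching lower bound, and the paper accordingly derives only the one-sided inequality $\delta_H d_{\H^3}(o,h^{-1}\cdot o)-d_G(id,h^{-1})>C_3$. That said, the one-sided bound would in fact suffice for your endgame, since the reverse Lipschitz estimate $d_{\H^3}(o,h\cdot o)\leq C\vert h\vert$ is automatic, giving that $\tau_o$ is a quasi-isometric embedding; and your final step (quasi-isometric embedding implies quasiconvexity implies convex cocompactness) is a legitimate alternative to the paper's Lemma \ref{exp}, which instead explicitly produces $h_n$ with $\vert h_n\vert-Dd_{\H^3}(o,h_n\cdot o)\to\infty$ using parabolic powers or the Cannon--Thurston-identified quasigeodesic. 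But until the bounded-density claim is actually established in this two-metric, Cannon--Thurston-mediated setting, the core of the argument is missing.
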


We remark that when $\Lambda_H=S^2_\infty$, the Patterson-Sullivan measure $\rho_o$ where $o$ is the origin of the Poincar\'{e} ball model is equal to the Lebesgue measure $Leb_{S^2_\infty}$ on $S^2_\infty$ up to constant multiple. Thus every Patterson-Sullivan measure $\rho_x$ is equal to $Leb_{S^2_\infty}$ up to homothety. The continuous extension $\widehat\tau_o$ of the orbit map $\tau_o$ for $H$ is called the {\it Cannon-Thurston map} of $H$  and
its existence has been verified recently for surface Kleinian groups by \cite{Mj5}. The paper \cite{Mj} dealt with the general case.
For the proof of Theorem \ref{main}, we modify and extend the argument used in the proof of \cite[Prop 5.4, Prop 5.5]{BHM}.
The assumption of the existence of the Cannon-Thurston map is crucial.

\section{Preliminaries}\label{pre}

\subsection{Hyperbolic spaces} For  $\delta \geq 0$, a geodesic metric space $(X,d)$ is called $\delta$-{\it hyperbolic} if for any geodesic triangle in $X$, each side of the triangle is contained in the $\delta$-neighborhood of the union of the other two sides. We call a geodesic metric space  a {\it hyperbolic space} (in the sense of Gromov) if it is $\delta$-hyperbolic for some $\delta\geq 0$.  
A hyperbolic metric space $(X,d)$ has a boundary at infinity $\partial X$ called the {\it Gromov boundary} which can be defined as follows. We say a sequence $\{x_n\}$ in $(X,d)$ {\it converges to infinity} if $$\liminf_{i,j\ra \infty}(x_i \vert x_j)_x=\infty$$ for some (hence every) basepoint $x$, where
$$(y \vert z)_x:= {1 \over 2}(d(x,y)+d(x,z)-d(y,z))$$
for $x,y,z \in X$. This product $(y \vert z)_x$ is called  the {\it Gromov product} of $y$ and $z$ with respect to $x$ and approximates within $2\delta$ the distance from $x$ to any geodesic $[y,z]$ joining $y,z$.
Two sequences $\{x_n\}$, $\{y_n\} \subset X$ converging to infinity are said to be {\it equivalent} if $$\liminf_{i,j\ra \infty}(x_i \vert y_j)_x=\infty$$ for some $x\in X$.
The Gromov boundary $\partial X$ is defined as the set of the equivalence classes of sequences converging to infinity in $X$. This definition is independent of the choice of the base point $x\in X$.
The Gromov product can be extended to $\partial X$. We can define a natural metric $d_\epsilon(\cdot,\cdot)$ on $\partial X$ 
such that $C^{-1}e^{-\epsilon(p\vert q)_x} \leq d_\epsilon(p,q) \leq Ce^{-\epsilon(p\vert q)_x}$ where $(p,q) \in \partial X\times \partial X$.

Fixing a finite generating set $S$ of $H$, the word metric $d_w(\cdot,\cdot)$ on $H$ is defined by setting $d_w(g, h)=\vert g^{-1}h\vert_S$ where $\vert g\vert_S$ is the minimum of the number of elements of $S$ whose product is $g$. 
When we consider the right Cayley graph $\Gamma_H$ of $H$ with the length of every edge being $1$, $d_w(g,h)$ is the minimum of the lengths of paths joining $g$ and $h$ in $\Gamma_H$.
A finitely generated group $H$ is called as a {\it hyperbolic group} if its Cayley graph $\Gamma_H$ is hyperbolic with respect to the word metric for a finite generating set $S$ of $H$. We simply denote the word length of $g$ as $\vert g\vert$ by fixing a finite generating set. We denote the Gromov boundary of $\Gamma_H$ by $\partial H$.
For basic properties of Gromov hyperbolic spaces, hyperbolic groups and their Gromov boundaries, we refer the reader to \cite{Gro, GH} and \cite{KB}.

\subsection{Kleinian groups}
Let $H$ be a finitely generated {\it Kleinian group}. We assume that $H$ has no elliptic elements. 
The {\it limit set} $\Lambda_H$ of $H$ is the set of limit points of an orbit $H \cdot o$ where $o\in\H^3$. 
The complementary open set $\Omega_H=S^2_\infty\backslash \Lambda_H$ becomes the domain of discontinuity of the action of $H$ and by Ahlfors' finiteness theorem, $\Omega_H/H$ consists of finitely many Riemann surfaces which are called as the {\it conformal boundaries at infinity} of $H$.


The {\it convex hull} of $H$ is defined to be the smallest convex set in $\H^3$ whose closure in $\H^3\cup S^2_\infty$ containes $\Lambda_H$. The {\it convex core} of $H$ is the quotient of its convex hull by $H$ itself.
The Kleinian group $H$ is called {\it geometrically finite(resp. convex cocompact)} if its convex core has finite volume(resp. if its convex core is compact).
Every convex cocompact Kleinian group is a hyperbolic group by the Schwarz-Milnor lemma(See \cite{Bri} for example).
By Thurston's uniformization theorem in \cite{Mor}, every geometrically infinite Kleinian group without parabolics has a convex cocompact
representation which is faithful. Thus if $H$ is geometrically infinite without parabolics, then it is hyperbolic. 

If the orbit map $\tau_o :H \ra \H^3$ sending $h \ra h\cdot o$ can be extended continuously to $\widehat\tau_o :\partial H \ra S^2_\infty$, we call $\widehat\tau_o$ as the {\it Cannon-Thurston map} of $H$ following \cite{CT, Bo1, Mc, Min, Mj5, Mj, Mj7}.
If $H$ is convex cocompact, then $\tau_o$ is a quasi-isometric embedding and we have a natural homeomorphism $\widehat \tau_o$ from the Gromov boundary $\partial H$ to the limit set $\Lambda_H$. 
When $H$ is a geometrically finite Kleinian group with parabolics, we still have a continuous extension of $\tau_o$ from the Floyd boundary (See \cite{Flo, Ger}) of $H$ to $\Lambda_H$. If we further assume $H$ is hyperbolic, then the Floyd boundary of $H$ is equal to the Gromov boundary $\partial H$ and a parabolic element $h$ gives two points $\{h_\infty, h_{-\infty}\}$ in $\partial H$ such that $\widehat\tau_o(h_\infty)=\widehat\tau_o(h_{-\infty})$ where $\{h_\infty, h_{-\infty}\}$ are the accumulation points of $\{h^i\vert i\in \Z\}$ in $\Gamma_H$.  

Now we consider the case when $H$ is a geometrically infinite hyperbolic Kleinian group and we assume the Cannon-Thurston map $\widehat\tau_o$ exists.
In this case, we can find an exiting sequence of closed geodesics $\{c_n\}$ in $\H^3/H$.
The exiting property of $\{c_n\}$ gives a bi-infinite quasigeodesic $l$ in $\Gamma_H$ such that 
the two endpoints of $l$ are identified by $\widehat\tau_o$. See \cite{Mj6} for details.
We need the following lemma for the proof of Theorem \ref{main}.

\begin{lem}\label{exp}
Let $H$ be a finitely generated Kleinian group with $\H^3/H$ not being convex cocompact and such that $H$ is Gromov hyperbolic.
We assume that the Cannon-Thurston map $\widehat\tau_o:\partial H \ra S^2_\infty$ exists.
Then there exists $\{h_n\}\subset H$ such that for any constant $D>0$, $\vert h_n \vert -Dd_{\H^3}(o, h_n\cdot o) \ra \infty$.
\end{lem}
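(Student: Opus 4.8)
The plan is to show that the conclusion is equivalent to the \emph{failure} of the orbit map $\tau_o$ to be a quasi-isometric embedding, and then to read off that failure directly from the hypothesis that $H$ is not convex cocompact. First I would record the one inequality that always holds: since $\tau_o$ is $H$-equivariant and each generator moves $o$ a bounded amount, there is a constant $L$ with $d_{\H^3}(o,h\cdot o)\le L\vert h\vert$ for all $h\in H$. Next I would reformulate the goal. Writing $d_h=d_{\H^3}(o,h\cdot o)$, I claim that the existence of $\{h_n\}$ with $\vert h_n\vert - D d_{h_n}\to\infty$ for every $D>0$ is equivalent to $\sup_{h\ne e}\vert h\vert/d_h=\infty$. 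Indeed, if the supremum is infinite one picks distinct $h_n$ with $\vert h_n\vert/d_{h_n}\ge n$; proper discontinuity of the $H$-action forces $d_{h_n}\to\infty$, and then $\vert h_n\vert - Dd_{h_n}=d_{h_n}(\vert h_n\vert/d_{h_n}-D)\to\infty$ for each fixed $D$. Conversely, if $\vert h\vert\le D_0 d_h$ for all $h$, then taking $D=2D_0$ gives $\vert h\vert - Dd_h\le -\vert h\vert\le 0$, so no sequence can work.

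Given this reduction, it remains to prove $\sup_{h\ne e}\vert h\vert/d_h=\infty$. Suppose not, so that $\vert h\vert\le D_0 d_h$ for some $D_0$ and all $h$. Combined with the always-valid bound $d_h\le L\vert h\vert$, this says exactly that $\tau_o\colon(H,d_w)\to(\H^3,d_{\H^3})$ is a quasi-isometric embedding. This is precisely the standard criterion for convex cocompactness: $H\cdot o$ is then quasiconvex in $\H^3$, the convex hull of $\Lambda_H$ lies within bounded distance of $H\cdot o$, and $H$ acts cocompactly on it, so the convex core of $H$ is compact. This contradicts hypothesis (2). Hence the supremum is infinite and the lemma follows.

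I would also dispatch the parabolic case of the main theorem directly, since it does not even need the dichotomy: a rank-one parabolic $h$ is undistorted in the hyperbolic group $H$, so $\vert h^n\vert\asymp n$, whereas a standard horospherical computation gives $d_{\H^3}(o,h^n\cdot o)\asymp 2\log n$. Thus $h_n:=h^n$ already satisfies $\vert h_n\vert - Dd_{\H^3}(o,h_n\cdot o)\to\infty$ for every $D$, and the geometrically infinite torsion-free case without parabolics is handled by the previous paragraph.

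Finally, I would remark on the more constructive route suggested by the discussion preceding the lemma, which is presumably the form needed later: take $h_n$ to be points marching out along the bi-infinite quasigeodesic $l$ whose endpoints $\xi_\pm\in\partial H$ are identified by $\widehat\tau_o$ to a single $p\in S^2_\infty$, or the differences $l(-n)^{-1}l(n)$. Here $\vert h_n\vert$ grows linearly because $l$ is a quasigeodesic, while the two points $l(\pm n)\cdot o$ both converge to $p$, so their Gromov product $(l(n)\cdot o\vert l(-n)\cdot o)_o$ in $\H^3$ tends to infinity. The main obstacle in this route --- and the only place where the Cannon--Thurston hypothesis and the exiting geodesics genuinely enter --- is to upgrade ``both ends land at the same $p$'' into the quantitative claim that $d_{\H^3}(o,h_n\cdot o)$ is \emph{sublinear} in $\vert h_n\vert$, since the Gromov product tending to infinity by itself only bounds the displacement below the sum of the two individual displacements. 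The soft dichotomy above is exactly what lets one sidestep this estimate.
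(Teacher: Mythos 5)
Your proof is correct, but it takes a genuinely different route from the paper's. The paper does not use the implication ``orbit map a quasi-isometric embedding $\Rightarrow$ convex cocompact'' at all; instead it argues by the two cases you anticipate in your last two paragraphs: if $H$ contains a parabolic $g$ it simply takes $h_n=g^n$, and otherwise it invokes the Cannon--Thurston hypothesis to produce a bi-infinite quasigeodesic $l=\{\cdots,g_{-n},\cdots,id,\cdots,g_n,\cdots\}$ in $\Gamma_H$ whose two endpoints $p,q\in\partial H$ satisfy $\widehat\tau_o(p)=\widehat\tau_o(q)$, deduces $(g_{-n}\cdot o\,\vert\, g_n\cdot o)_o\ra\infty$ from continuity of $\widehat\tau_o$, rules out $\tau_o\vert_l$ being a quasi-isometric embedding via the Morse lemma (the image would stay near a geodesic in $\H^3$, keeping the Gromov product bounded), and then extracts $h_n=g_{i(n)}^{-1}g_{j(n)}$ along $l$ --- this last extraction being exactly the soft ``unbounded ratio $\vert h\vert/d_{\H^3}(o,h\cdot o)$ implies $\vert h\vert-Dd_{\H^3}(o,h\cdot o)\ra\infty$'' reduction you spell out in your first paragraph and which the paper leaves implicit. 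Your dichotomy-free argument is sound: a quasi-isometrically embedded orbit is quasiconvex by stability of quasigeodesics in $\H^3$, the convex hull of $\Lambda_H$ lies in a bounded neighborhood of the union of geodesics joining pairs of limit points and hence of $H\cdot o$, so the convex core is compact; you should cite these standard facts (e.g.\ the characterization of convex cocompactness via quasiconvexity) and note that the elementary degenerate cases ($\Lambda_H$ with at most one point) cause no trouble since parabolics are distorted in $\H^3$, while torsion-freeness gives $\inf_{h\neq id}d_{\H^3}(o,h\cdot o)>0$ as your reduction requires. What your route buys is generality and economy: the lemma then holds for every finitely generated torsion-free Kleinian group that is not convex cocompact, making no use of the Cannon--Thurston map (hypothesis (3) is revealed as inherited from the setting of Theorem \ref{main} rather than needed for the lemma) and not even requiring hyperbolicity of $H$. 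What the paper's route buys is an explicit location for the distortion --- the bad sequence runs along the quasigeodesic $l$ tied to the exiting closed geodesics --- and your final paragraph correctly diagnoses the quantitative point there: divergence of the Gromov product alone does not yield sublinear displacement, which is why the paper, like you, ultimately passes through the failure of a quasi-isometric embedding rather than a direct estimate.
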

\begin{proof}
We fix a finite generating set $S$ of $H$.
If $H$ has a parabolic element $g$, then we can just take $h_n$ as $g^n$.
If not, then $H$ is geometrically infinite and there exists a bi-infinite quasigeodesic $l$ in $\Gamma_H$ joining two points $p,q$ in $\partial H$ such that $\widehat\tau_o(p)=\widehat\tau_o(q)$.
We represent $l$ as a bi-infinite sequence of vertices in $\Gamma_H$ 
$$\{\cdots g_{-n}, g_{-(n-1)}, \cdots, id, \cdots, g_n, g_{n+1},\cdots\}$$ 
such that $\vert g_i^{-1}g_{i+1}\vert =1$.
Then since $\widehat\tau_o(p)=\widehat\tau_o(q)$ and $\widehat\tau_o$ is a continuous extension of $\tau_o$, we have $(g_{-n}\cdot o\vert g_n\cdot o)_o \ra \infty$. If $\tau_o\vert_l$ is a quasi-isometric embedding then by the Morse-Mostow lemma(See \cite{Bri} for example), the image of $\tau_o\vert_l$ is contained in a uniform  neighborhood of a bi-infinite geodesic in $\Gamma_H$. Since $(g_{-n}\cdot o\vert g_n\cdot o)_o \ra \infty$, this is not possible and thus $\tau_o\vert_l$ cannot be a quasi-isometric embedding.
We note that $d_{\H^3}(g, h)\leq C\vert g^{-1}h\vert$ by the triangle inequality where $C:=\max\limits_{s\in S}d_{\H^3}(o, s\cdot o)$.
We can also see the following.
There exists a sequence of integers $i(n)$ and $j(i)>i$ such that for any $D>0$, $\vert g_i^{-1}g_j \vert-Dd_{\H^3}(g_i\cdot o, g_j\cdot o)  \ra \infty$ as $n\ra\infty$. We take $h_n$ as $g_{i(n)}^{-1}g_{j(i)}$. 

\end{proof}

\subsection{Poisson boundary}
Let $\mu$ be a probability measure on a group $H$.
\begin{itemize}
\item $\mu$ is called {\it nondegenerate} if the support of $\mu$ generates $H$ as a semigroup.
\item $\mu$ is called {\it symmetric} if $\mu(A)=\mu(A^{-1})$ where $A\subset H$ and $A^{-1}$ is the set of the inverses of elements of $A$.
\end{itemize}
The word length $\vert h \vert$ of an element $h$ in $H$ is with respect to a fixed finite generating set of $H$. The {\it first moment} of $(H,\mu)$ is defined to be $\sum\limits_{h\in H}\vert h\vert\mu(h)$. The {\it entropy} of $(H,\mu)$ is $\lim\limits_{n \to \infty}-\sum\limits_{h\in H}\mu(h)\log\mu(h)$. When $\mu$ has a finite support, both of its entropy and its first moment are finite.

The {\it random walk} on $H$ with respect to $\mu$ is a Markov chain $\{Y_n\}$ with the transition probabilities given by $p_{h_1,h_2}=\mu(h_1^{-1}h_2)$. We denote $\{Y_n\}$ by $Y$. A random walk $Y$ can also be described as a sequence of independent random variables $\{Z_n\}$ with values in the probability space $(H,\mu)$ such that $Y_n=Z_0Z_1\cdots Z_n$. When $Y_0=id$, the random walk starts from the identity element $id$ of $H$.
We regard $Z=\{Z_n\}$ as an element of the product probability space $(H^\N,\mu^\N)$ and $Y$ as an element of the Kolmogrov representation space
$(H^\N, \mathbb{P}_{id})$ where $$\mathbb{P}_{id}(\{Y_n\} \vert Y_0=id, Y_1=h_1, Y_2=h_2, \cdots, Y_n=h_n\})=p_{id,h_1} p_{h_1,h_2} \cdots p_{h_{n-1}, h_n}$$
and $\mathbb{P}_{id}(\{Y_n\} \vert Y_0\neq id \})$ is defined to be zero.
We denote $\mathbb{P}_{id}$ by $\mathbb{P}$.

The time shift operator $T$ acts on $(H^\N,\mathbb{P})$ by $(TY)_n=Y_{n+1}$ and defines an equivalence relation $\thicksim$ by saying $Y\thicksim Y'$ if and only if there exist positive integers $k, k'$ such that $T^kY=T^{k'}Y'$.
\begin{dfn}
The Poisson boundary of $H$ with respect to $\mu$ is the quotient space of $(H^\N, \mathbb{P})$ by the smallest measurable equivalence relation generated by the equivalence relation $\thicksim$. 
\end{dfn}
The smallest measurable equivalence relation generated by $\thicksim$ is called as the {\it measurable envelope} \cite{Roh}. 
When $H$ is a hyperbolic group, we can consider the hitting measure $\nu$ of a random walk $Y$ on $\partial H$ and the Poisson boundary of $H$ can be identified with $(\partial H, \nu)$  by \cite{Kai1, Kai2}. More precisely, it can be shown that
$\{Y_n\}$ converges to a point in $\partial H$ for almost every sample path $\{Y_n\}$ and $\nu(A)$ can be defined as the probability for a sample path $\{Y_n\}$ converges to a point in $A\subset \partial H$. Thus $\nu$ can also be called as {\it the law of $Y_\infty$}.
When $H$ is a hyperbolic Kleinian group, $H$ acts on $\H^3$ and we can consider the law of $Y_\infty\cdot o$ if $\{Y_n\cdot o\}$ converges to a point in $S^2_\infty$ for almost every sample path $\{Y_n\}$. In fact, the law of $Y_\infty\cdot o$ is the push forward of $\nu$ by $\widehat\tau_o$ by the following Theorem \ref{KM} which summarizes \cite[Corollary 6.2]{KM}.

\begin{thm}(Karlsson and Margulis \cite{KM})\label{KM}
Let $H$ be a countable group isometrically acting on a uniformly convex, complete metric space $(X,d)$ which is nonpositively curved in the sense of Busemann. Let $\mu$ be a probability measure on $H$ with $\sum\limits_{h\in H}d(o, h\cdot o)\mu(h)<\infty$. Let $\partial X$ be the ideal boundary of $X$ consisting of asymptotic classes of geodesic rays. Then 
\begin{itemize}
\item Almost every sample path $\{Z_n\}$ in $H^\N$ with respect to $\mu^\N$ converges to a geodesic ray in $X$.
\end{itemize}
Thus we have a map $\xi:(H^\N, \mu^\N) \ra \partial X$ and we give $\partial X$ the push-forward measure $\xi_*(\mu^\N)$ so that $\xi$ is measurable.
\begin{itemize}
\item $(\partial X, \xi_*(\mu^\N))$ becomes the Poisson boundary of $(H, \mu)$ if the lattice counting function is subexponential, i.e., if $$\#\{h\in H \vert d(o,h\cdot o)<r\} \leq e^{Cr}$$ for some constant $C>0$.
\end{itemize}
\end{thm}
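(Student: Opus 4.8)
The plan is to establish the two bulleted conclusions separately: the first is a geometric statement about almost sure convergence of sample paths, and the second identifies $(\partial X, \xi_*(\mu^\N))$ with the full Poisson boundary. For the first, I would begin by applying Kingman's subadditive ergodic theorem to the cocycle $n \mapsto d(o, Y_n\cdot o)$ over the shift on $(H^\N, \mu^\N)$. This is genuinely subadditive because $d(o, Y_{n+m}\cdot o) \le d(o, Y_n\cdot o) + d(o, Y_n^{-1}Y_{n+m}\cdot o)$ and $Y_n^{-1}Y_{n+m} = Z_{n+1}\cdots Z_{n+m}$ is distributed as a shifted random walk; the finite first moment hypothesis $\sum_{h} d(o, h\cdot o)\mu(h) < \infty$ supplies integrability, so the drift $A := \lim_n d(o, Y_n\cdot o)/n$ exists almost surely and in $L^1$. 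The real content is then to upgrade this linear escape to convergence of $\{Y_n\cdot o\}$ to a single asymptotic class of geodesic rays.

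For that upgrade I would work with the Busemann (horofunction) compactification rather than the visual boundary, since $X$ is assumed only complete and uniformly convex, not proper. Following the Karlsson-Margulis multiplicative ergodic scheme, I would attach normalized Busemann-type functions to the points $Y_n\cdot o$ and extract, through a maximizing and subadditivity argument, a limiting horofunction whose descent direction is realized by a geodesic ray $\gamma$. The nonpositive curvature in the sense of Busemann makes $t \mapsto d(\gamma(t), Y_n\cdot o)$ convex, and the uniform convexity of $X$ then forces the escaping orbit to fellow-travel a \emph{unique} such ray; this uniqueness is exactly what produces a well-defined limit point in $\partial X$, hence the measurable map $\xi:(H^\N,\mu^\N)\ra \partial X$ and the hitting measure $\xi_*(\mu^\N)$.

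To identify $(\partial X, \xi_*(\mu^\N))$ with the Poisson boundary I would invoke Kaimanovich's ray (or strip) criterion together with his entropy estimate. Concretely, one shows that, conditioned on the limit $\xi(Z)$, the position $Y_n\cdot o$ is asymptotically confined to a tube around the limiting ray whose intersection with each ball grows subexponentially; the hypothesis $\#\{h : d(o,h\cdot o) < r\} \le e^{Cr}$ on the lattice counting function is precisely the ingredient that controls the number of group elements available in each such tube. Feeding this counting bound into the entropy comparison shows that the conditional entropy of $Y_n$ given the boundary point is negligible in the Ces\`aro sense, so no information escapes the boundary and, by Kaimanovich's criterion, the $\mu$-boundary $(\partial X, \xi_*(\mu^\N))$ is maximal.

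The main obstacle I anticipate is the lack of properness of $X$: without local compactness one cannot simply pass to convergent subsequences in a compact visual boundary, so the entire convergence argument must be carried out at the level of Busemann functions, and the uniform convexity must be made to supply both the missing compactness and the uniqueness of the limiting ray. A secondary difficulty is the borderline behavior when the drift $A$ vanishes, where convergence to $\partial X$ can genuinely fail; the cleanest route is either to show that nondegeneracy and the geometry force $A>0$ in the cases of interest, or to isolate the degenerate regime so that the stated conclusion holds trivially there.
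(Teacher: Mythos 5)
The paper contains no proof of this statement to compare against: Theorem \ref{KM} is quoted as a black box from Karlsson--Margulis \cite{KM} (their Corollary 6.2), so the only meaningful benchmark is the proof in the cited literature. Measured against that, your sketch reconstructs the genuine argument essentially correctly: Kingman's subadditive ergodic theorem gives the drift $A$; a multiplicative-ergodic refinement of Kingman (a.e.\ existence of times at which the cocycle is nearly linear along the whole tail) combined with uniform convexity and Busemann convexity produces a unique geodesic ray sublinearly tracked by the orbit; and Kaimanovich's entropy/ray criterion, fed the subexponential lattice counting bound, promotes the hitting measure to the full Poisson boundary. One refinement: Karlsson--Margulis do not pass through horofunctions; they argue directly that the geodesic segments $[o, Y_{n_i}\cdot o]$ at the good times form a Cauchy family, with uniform convexity substituting for the compactness that properness would give. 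The horofunction route you describe is the later Karlsson--Ledrappier variant (the horofunction boundary is pointwise-compact for an arbitrary metric space), so it is a legitimate alternative rather than a gap, though in the Busemann-convex setting it is the less economical of the two. Your two caveats are exactly the points the quoted statement glosses over: convergence to $\partial X$ requires $A>0$ (when $A=0$ the subexponential counting forces zero entropy, the Poisson boundary is trivial, and the identification holds vacuously), and Kaimanovich's criterion needs finite entropy, which here follows from finite first moment together with the counting bound. In the paper's application ($\mu$ symmetric, nondegenerate, finitely supported on a nonelementary Kleinian group) nonamenability gives $A>0$, so the degenerate regime you flag does not arise.
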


A similar form of Theorem \ref{KM} was also mentioned in Remark 3 following \cite[Theorem 7.7]{Kai2}.
It is known that the lattice counting function is subexponential for a large class of discrete subgroups of the isometry groups of Cartan-Hadamard manifolds. In particular, for pinched negatively curved case, it is proven in \cite[Theorem 3.6.1]{Yue}. Thus every Kleinian group has a subexponential lattice counting function.
Moreover, since we are assuming $H$ is finitely generated, we have $d_{\H^3}(o, h\cdot o) \leq D\cdot \vert h\vert$ by the triangle inequality.

Therefore we can apply Theorem \ref{KM} to a Kleinian group $H$ acting on $\H^3$. We assume $\mu$ is a symmetric, nondegenerate probability measure on $H$ with finite support. Let $\nu_H$ be the law of $Y_\infty\cdot o$. Then $(S^2_\infty, \nu_H)$ becomes the Poisson boundary of $(H,\mu)$ and thus $\widehat \tau_o :(\partial H, \nu) \ra (S^2_\infty, \nu_H)$ is a measurable isomorphism.

\subsection{Conformal density}
For $x$ in $\H^3$,
the {\it Busemann function} $b_{x, \eta}(\cdot)$ at $\eta$ with $b_{x, \eta}(x)=0$ can be defined by choosing a geodesic ray $\alpha(t)$ from $\alpha(0)=x\in \H^3$ toward $\eta\in S^2_\infty$ as follows.
$$b_{x, \eta}(y)=\lim\limits_{t\ra \infty}(d_{\H^3}(y, \alpha(t))-d_{\H^3}(x, \alpha(t)))$$
\begin{lem}(\cite[Lemma 3.2.1]{Nic})\label{busemann}
Consider the Poincar\'{e} ball model $\{x\in \R^3: \vert x\vert<1\}$ of $\H^3$ where $\vert x\vert$ is the usual Euclidean norm of $x$. Then
$$e^{b_{x,\eta}(y)}=\frac{P(x, \eta)}{P(y, \eta)}$$
where $P(x,\eta)$ is the Poisson kernel $(1-\vert x\vert^2)/\vert x-\eta\vert^2$.
Thus $b_{x,\eta}(y)$ is a continuous function of $x,y\in \H^3$ and $\eta\in S^2_\infty$.
\end{lem}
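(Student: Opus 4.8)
The plan is to establish the identity by a direct computation in the ball model and then to pass to the limit in the definition of the Busemann function. The starting point is the explicit formula for the hyperbolic distance between two interior points $u,v$,
$$\cosh d_{\H^3}(u,v)=1+\frac{2\vert u-v\vert^2}{(1-\vert u\vert^2)(1-\vert v\vert^2)},$$
together with the choice of a point $p=\alpha(t)$ on the geodesic ray from $x$ toward $\eta$, for which $p\ra\eta$ and $\vert p\vert\ra 1$ as $t\ra\infty$.

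First I would read off the leading asymptotics of $d_{\H^3}(y,p)$. Because $1-\vert p\vert^2\ra 0$ while $\vert y-p\vert\ra\vert y-\eta\vert$, the quotient above diverges and $\cosh d_{\H^3}(y,p)\sim\tfrac12 e^{d_{\H^3}(y,p)}$, so that
$$e^{d_{\H^3}(y,p)}=\frac{4\vert y-\eta\vert^2}{(1-\vert y\vert^2)(1-\vert p\vert^2)}\,(1+o(1))$$
as $t\ra\infty$. Writing the same expression with $y$ replaced by $x$ and dividing, both the constant $4$ and the common factor $1-\vert p\vert^2$ cancel, which gives
$$e^{\,d_{\H^3}(y,p)-d_{\H^3}(x,p)}=\frac{\vert y-\eta\vert^2\,(1-\vert x\vert^2)}{\vert x-\eta\vert^2\,(1-\vert y\vert^2)}\,(1+o(1)).$$
Letting $t\ra\infty$ and taking logarithms, the right-hand side is exactly $P(x,\eta)/P(y,\eta)$, so that $b_{x,\eta}(y)=\log\big(P(x,\eta)/P(y,\eta)\big)$ and the stated formula follows on exponentiating.

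The one point I would treat with care is the cancellation of the factor $1-\vert p\vert^2$: this is what turns a difference of two individually divergent distances into a convergent quantity, and it also shows that only the Euclidean limit $p\ra\eta$ matters, not the particular path along which $p$ tends to $\eta$; in particular the limit does not depend on the chosen ray, so $b_{x,\eta}$ is well defined. The continuity assertion then needs no further limiting argument: the Poisson kernel $P(\cdot,\cdot)$ is continuous and strictly positive for $x$ in the open ball, hence $\log\big(P(x,\eta)/P(y,\eta)\big)$ is jointly continuous in $x,y\in\H^3$ and $\eta\in S^2_\infty$, as claimed.
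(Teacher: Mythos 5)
Your computation is correct and complete: the ball-model distance formula $\cosh d(u,v)=1+2\vert u-v\vert^2/\bigl((1-\vert u\vert^2)(1-\vert v\vert^2)\bigr)$ together with the asymptotic $\cosh d\sim\frac12 e^{d}$ as $d\to\infty$ yields exactly $e^{b_{x,\eta}(y)}=P(x,\eta)/P(y,\eta)$, the cancellation of the factor $1-\vert p\vert^2$ is the right point to flag, and continuity then follows from positivity and joint continuity of the Poisson kernel. The paper gives no proof of its own --- it simply cites Lemma 3.2.1 of Nicholls --- and your argument is essentially the standard derivation found there, so your version is a correct, self-contained substitute for the citation.
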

Fixing $x,y$ in $\H^3$, the Poincar\'{e} series for $H$ is 
$$g_s(x,y)=\sum_{h \in H} e^{-sd_{\H^3}(x,h\cdot y)}$$
The critical exponent $\delta_H$ of $H$ is defined as
$$\delta_H=\limsup\limits_{r \ra \infty}\frac{1}{r}\log(\#\{h\in H \vert d_\H^3(o, h\cdot o)\leq r\})$$
Equivalently, $\delta_H$ is the infimum of the set of $s$ such that $g_s(x,y)$ is finite.
We call $H$ is {\it divergent} if $g_s(x,y)$ diverges at $s=\delta_H$. Otherwise $H$ is called {\it convergent}.
These definitions are independent of the choices of $x,y\in \H^3$.
Consider a family of measures $\{\rho_x^s\}$ defined by $$\rho_x^s=\frac{1}{g_s(y,y)}\sum_{h \in H} e^{-sd_{\H^3}(x,h\cdot y)}\delta_{h\cdot y}$$ where $\delta_{h\cdot y}$ is the Dirac measure at $h\cdot y$. Then we can find a sequence $\{s_i\}$ with $s_i \ra \delta_H^{-}$ such that $\rho_x^{s_i}$ weakly converges to a finite measure $\rho_x$ on the compact space $\H^3\cup S^2_\infty$.
When $H$ is divergent, $\rho_x$ has its support on the limit set $\Lambda_H$. Since $\{\rho_x\}$ satisfies $$\frac{d\rho_x}{d\rho_y}(\eta)=e^{-\delta_H b_{x,\eta}(y)},  h^{*}\rho_x=\rho_{h^{-1}\cdot x}$$
it becomes an $H$-invariant {\it conformal density} of dimension $\delta_H$. 
Here the pull-back measure $h^*\rho_x$ is defined by setting $(h^*\rho_x)(A):=\rho_x(h\cdot A)$ for $A\subset \Lambda_H$.
Even when $H$ is convergent, we still can construct a conformal density by increasing the Dirac mass on each orbit point suitably \cite{Pa, Sul} and we denote a resulting conformal density also by $\rho_x$.
We call $\rho_x$ as the {\it Patterson-Sullivan measure} with base point $x\in \H^3$ in either case of $H$ being convergent or divergent.

As an application of the Tameness theorem(see section 9 in \cite{Ca} and \cite[Prop. 3.9]{CS}), a Kleinian group $H$ is divergent if and only if either $H$ is geometrically finite or $\Lambda_H=S^2_\infty$. When $\Lambda_H=S^2_\infty$, $\rho_x$ is equal to $Leb_{S^2_\infty}$ up to homothety and the diagonal action of $H$ on $S^2_\infty\times S^2_\infty\backslash\Delta$ is ergodic with respect to $Leb_{S^2_\infty}\otimes Leb_{S^2_\infty}$. Here $\Delta$ means the diagonal set.

\subsection{Harmonic Density}
The {\it Green function} on a group $H$ with a probability measure $\mu$ is defined by 
$$G(g, h)=\sum\limits_{n=0}^{\infty}\mu^n(g^{-1}h)$$
where $\mu^n$ is the $n$-th convolution power of $\mu$.
Then $F(g,h)=G(g,h)/G(h,h)$ is the probability that there exists $n\in \N$ such that $gY_n=h$.
The Green metric $d_G$ is defined by $$d_G(g,h)=-\log F(g,h)$$ 
If $H$ is a hyperbolic group and
if $\mu$ is finitely supported nondegenerate symmetric probability measure on $H$, then $d_G$ is a left invariant hyperbolic metric quasi-isometric to the word metric on $H$ by \cite[Corollary 1.2]{BHM}.

The {\it Martin kernel} $K:H\times H \ra \R$ is defined by 
$$K(g,h)=\frac{F(g,h)}{F(id,h)}$$
There exist constants $\{C_g\}$ such that for all $h\in H$, $K(g,h)\leq C_g$ for each $g\in H$. Let $H\cup \partial_M H$ be the metric completion of $H$ with respect to the metric $d_M$ defined by $$d_M(h_1,h_2)=\sum\limits_{h\in H}D_h\frac{\vert K(h, h_1)-K(h, h_2)\vert+\vert \delta_{h,h_1}-\delta_{h, h_2}\vert}{C_h+1}$$
where $\{D_h\}$ is chosen so that $\sum\limits_{h\in H}D_h<\infty$. $\delta_{h,g}$ is the Kronecker delta.
Then it can be shown that $H\cup \partial_M H$ is in fact a compactification of $H$ and the Martin kernel can be continuously extended 
to $H\times (H\cup \partial_M H)$. We call $\partial_M H$ as the {\it Martin boundary} of $H$. When $H$ is a hyperbolic group, $\partial_M H$ is homeomorphic to the Gromov boundary of $H$.
For this, see \cite{An, Kai3} or \cite[Corollary 1.8]{BHM}. 

A positive function $u$ on $H$ is called {\it harmonic} if $u(h)=\sum\limits_{g\in H}\mu(h^{-1}g)u(g)$ for all $h\in H$ and the 
Martin representation theorem says for any harmonic function $u$ on $H$, there exists a measure $\nu_u$ on $\partial_M H$ such that $$u(h)=\int_{\partial_M H}K(h, \xi)d\nu_u(\xi)$$
When we take $u$ as a constant function $u\equiv 1$, the support of $\nu_u$ with the measure $\nu_u=\nu_1$ becomes the Poisson boundary of $H$. 
It is known that almost every sample path $\{Y_n\}$ converges to a point on $\partial_M H$.
Thus for a hyperbolic group $H$, we can identify $\nu$ with $\nu_1$.
We have the following change of variable formula.
$$\frac{d(h^*\nu)}{d\nu}(\xi)=K(h^{-1}, \xi)$$
For a hyperbolic group $H$, 
we can define the Busemann function on $H$ with respect to the Green metric $d_G$ by 
$$b^G_{id, \xi}(h)=\sup\limits_{\{h_n\}}\limsup\limits_{n\ra \infty}(d_G(h_n, id)-d_G(h_n, h))$$ where $\sup$ is taken over all sequences $\{h_n\}$ converging to $\xi$.
If we choose $\{h_n\}$ along a quasigeodesic ray toward $\xi$ in $\Gamma_H$, then we can just take the usual limit as $h_n\ra \xi$ to define $b^G_{id, \xi}(h)$.
By the definition of the Martin kernel, we get $$\frac{d(h^*\nu)}{d\nu}(\xi)=e^{b^G_{id, \xi}(h^{-1})}$$ 
We have the same formula for the push-forward measure $\nu_H$ on $S^2_\infty$ which was used in the proof of \cite[Prop. 5.5]{BHM}
\begin{lem}
Let $H$ be a Kleinian group which is hyperbolic and
let $\nu_H$ be the push-forward measure $\widehat\tau_o(\nu)$ on $S^2_\infty$ as before. Then
$$\frac{d(h^*\nu_H)}{d\nu_H}(\widehat\tau_o(\xi))=e^{b^G_{id, \xi}(h^{-1})}$$
where $\xi\in \partial H$ and $(h^*\nu_H)(A)=\nu_H(h\cdot A)$ for $A\subset S^2_\infty$ for any $\nu_H$-measurable set $A\subset S^2_\infty$.
\end{lem}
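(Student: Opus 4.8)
The plan is to transfer the formula $\frac{d(h^*\nu)}{d\nu}(\xi)=e^{b^G_{id,\xi}(h^{-1})}$, already established on $(\partial H,\nu)$, to $(S^2_\infty,\nu_H)$ through the Cannon-Thurston map, exploiting the fact recorded earlier that $\widehat\tau_o:(\partial H,\nu)\ra(S^2_\infty,\nu_H)$ is a measurable isomorphism. The argument is essentially formal once equivariance is in place, so the work is in organizing the transfer correctly rather than in any hard estimate.

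First I would record the equivariance of $\widehat\tau_o$. Since the orbit map satisfies $\tau_o(hg)=hg\cdot o=h\cdot\tau_o(g)$, it is $H$-equivariant with respect to left multiplication on $H$ and the isometric action on $\H^3$; passing to the continuous boundary extension gives $\widehat\tau_o(h\cdot\xi)=h\cdot\widehat\tau_o(\xi)$ for all $\xi\in\partial H$ and $h\in H$. A direct set-theoretic computation then shows $\widehat\tau_o^{-1}(h\cdot A)=h\cdot\widehat\tau_o^{-1}(A)$ for every $A\subset S^2_\infty$: one has $\widehat\tau_o(\xi)\in h\cdot A$ iff $h^{-1}\cdot\widehat\tau_o(\xi)=\widehat\tau_o(h^{-1}\cdot\xi)\in A$ iff $\xi\in h\cdot\widehat\tau_o^{-1}(A)$.

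Next, using $\nu_H=(\widehat\tau_o)_*\nu$ together with the definitions $(h^*\nu_H)(A)=\nu_H(h\cdot A)$ and $(h^*\nu)(B)=\nu(h\cdot B)$, I would check that pulling back and pushing forward commute:
$$(h^*\nu_H)(A)=\nu_H(h\cdot A)=\nu(\widehat\tau_o^{-1}(h\cdot A))=\nu(h\cdot\widehat\tau_o^{-1}(A))=(h^*\nu)(\widehat\tau_o^{-1}(A)),$$
so that $h^*\nu_H=(\widehat\tau_o)_*(h^*\nu)$. Thus $\nu_H$ and $h^*\nu_H$ are simultaneously the push-forwards under $\widehat\tau_o$ of $\nu$ and of $h^*\nu$.

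Finally I would invoke the measurable isomorphism. For a measure isomorphism $\phi:(X,m)\ra(Y,\phi_*m)$ and an absolutely continuous measure $m'\ll m$ with density $f=dm'/dm$, the push-forward satisfies $\frac{d(\phi_*m')}{d(\phi_*m)}(\phi(x))=f(x)$ for $m$-almost every $x$, since the identity $f=(f\circ\phi^{-1})\circ\phi$ holds $m$-a.e.\ and change of variables for $\phi_*$ applies. Taking $\phi=\widehat\tau_o$, $m=\nu$, $m'=h^*\nu$ and inserting the known density yields exactly
$$\frac{d(h^*\nu_H)}{d\nu_H}(\widehat\tau_o(\xi))=\frac{d(h^*\nu)}{d\nu}(\xi)=e^{b^G_{id,\xi}(h^{-1})}$$
for $\nu$-almost every $\xi\in\partial H$. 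The one point requiring care is that $\widehat\tau_o$ is highly non-injective, collapsing the endpoint pairs of parabolics and of the quasigeodesic $l$, so one cannot transfer the density by a naive change of variables; the non-injectivity is harmless precisely because the identified pairs form a $\nu$-null set, which is what the measurable-isomorphism statement encodes. This is the only genuine subtlety, the remainder being bookkeeping.
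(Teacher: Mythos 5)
Your proposal is correct and follows essentially the same route as the paper, which states this lemma without an explicit proof precisely because it is the immediate transfer of the already-established formula $\frac{d(h^*\nu)}{d\nu}(\xi)=e^{b^G_{id,\xi}(h^{-1})}$ on $\partial H$ through the equivariant measurable isomorphism $\widehat\tau_o:(\partial H,\nu)\ra (S^2_\infty,\nu_H)$ recorded after Theorem \ref{KM}. Your write-up merely makes the implicit bookkeeping explicit --- the boundary equivariance $\widehat\tau_o(h\cdot\xi)=h\cdot\widehat\tau_o(\xi)$, the identity $h^*\nu_H=(\widehat\tau_o)_*(h^*\nu)$, and the correct observation that it is a.e.-injectivity (the measurable-isomorphism property), not pointwise injectivity of the Cannon--Thurston map, that licenses the Radon--Nikodym transfer.
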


Now we apply \cite[Theorem 3.3]{Kai3} to the Cayley graph of the hyperbolic group $H$.
Since the Gromov boundary $\partial H$ itself is the conical limit set of $H$, we can see that the diagonal action of $H$ on $\partial H\times \partial H\backslash\Delta$ with respect to the measure class of $\nu \otimes \nu$ is ergodic. 
\begin{lem}\label{cor}
Let $H$ be a Kleinian group which is hyperbolic. Let $\mu$ be a symmetric nondegenerate probability measure on $H$ with finite support.
Then the diagonal action of $H$ on $S^2_\infty\times S^2_\infty\backslash \Delta$ is ergodic with respect to $\nu_H\otimes \nu_H$. 
\end{lem}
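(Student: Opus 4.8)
The plan is to transport the ergodicity of the diagonal action on $(\partial H\times\partial H\backslash\Delta,\,\nu\otimes\nu)$, which was just recorded, across the Cannon-Thurston map to $(S^2_\infty\times S^2_\infty\backslash\Delta,\,\nu_H\otimes\nu_H)$. The key input is that $\widehat\tau_o$ is $H$-equivariant and, by the discussion following Theorem \ref{KM}, realizes a measurable isomorphism $(\partial H,\nu)\to(S^2_\infty,\nu_H)$ of the two models of the Poisson boundary of $(H,\mu)$. Equivariance holds because $\tau_o(gh)=g\cdot\tau_o(h)$, so its continuous boundary extension satisfies $\widehat\tau_o(g\cdot\xi)=g\cdot\widehat\tau_o(\xi)$ for $g\in H$ and $\xi\in\partial H$. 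Thus the product map $\Phi:=\widehat\tau_o\times\widehat\tau_o$ is equivariant for the diagonal actions and pushes $\nu\otimes\nu$ forward to $\nu_H\otimes\nu_H$, and since a product of measurable isomorphisms is again one, $\Phi$ is a measurable isomorphism $(\partial H\times\partial H,\nu\otimes\nu)\to(S^2_\infty\times S^2_\infty,\nu_H\otimes\nu_H)$.

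The main point to check is that $\Phi$ matches the two diagonals up to null sets, the subtlety being that $\widehat\tau_o$ need not be injective: a parabolic or an exiting geodesic produces distinct boundary points identified by $\widehat\tau_o$, exactly as in the construction used for Lemma \ref{exp}. However, a measurable isomorphism between standard probability spaces restricts to an honest bijection on a pair of conull sets, so there is a conull $X_0\subset\partial H$ on which $\widehat\tau_o$ is injective. On the conull set $X_0\times X_0$ one then has $\Phi(\xi,\eta)\in\Delta$ if and only if $\widehat\tau_o(\xi)=\widehat\tau_o(\eta)$ if and only if $\xi=\eta$; hence $\Phi^{-1}(\Delta)$ and $\Delta$ coincide modulo a $\nu\otimes\nu$-null set. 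Since $\Delta$ is invariant under the diagonal action in both spaces, $\Phi$ descends to an equivariant measurable isomorphism $(\partial H\times\partial H\backslash\Delta,\nu\otimes\nu)\to(S^2_\infty\times S^2_\infty\backslash\Delta,\nu_H\otimes\nu_H)$.

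Finally, ergodicity with respect to a measure class is invariant under equivariant measurable isomorphism: if $B\subset S^2_\infty\times S^2_\infty\backslash\Delta$ is $H$-invariant, then $\Phi^{-1}(B)$ is an $H$-invariant subset of $\partial H\times\partial H\backslash\Delta$, which by the ergodicity recorded above is $\nu\otimes\nu$-null or conull; transporting back through $\Phi$ shows $B$ is $\nu_H\otimes\nu_H$-null or conull. This yields the asserted ergodicity, and I expect the only delicate step to be the null-set bookkeeping for the non-injectivity of $\widehat\tau_o$, which the measurable-isomorphism statement handles cleanly.
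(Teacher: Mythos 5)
Your proposal is correct and follows essentially the same route as the paper: the paper's entire proof is the one-line observation that $(\partial H,\nu)$ and $(S^2_\infty,\nu_H)$ are measurably isomorphic via the equivariant map $\widehat\tau_o$ (by Theorem \ref{KM}), which transports the Kaimanovich ergodicity statement on $\partial H\times\partial H\backslash\Delta$ recorded just before the lemma. Your additional null-set bookkeeping for the non-injectivity of $\widehat\tau_o$ and the matching of the two diagonals is a genuine detail the paper leaves implicit, and you handle it correctly.
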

\begin{proof}
We know $(\partial H, \nu)$ and $(S^2_\infty, \nu_H)$ are measurably isomorphic by the equivariant map $\widehat\tau_o$ by Theorem \ref{KM}.
\end{proof}

\section{Singularity of measures on $\Lambda_H$}
In this section, we prove Theorem \ref{main}.
Recall that for $\eta_1, \eta_2 \in S^2_\infty$, the {\it Busemann cocycle} $B_o(\eta_1, \eta_2)$ is defined as $b_{o,\eta_1}(y)+b_{o,\eta_2}(y)$ where $y$ is any point in the bi-infinite geodesic $l$ joining $\eta_1$ and $\eta_2$. Geometrically $B_o(\eta_1, \eta_2)$ is the length of the geodesic subsegment of $l$ contained in the intersection of the horoballs passing through $o$ and centered at $\eta_1, \eta_2$. Thus it is nonnegative for any $(\eta_1,\eta_2)\in S^2_\infty\times S^2_\infty\backslash\Delta$ and it is zero if and only if $o$ is contained in $l$.

\begin{proof}[Proof of Theorem \ref{main}] 
We follow and extend the proof of \cite[Prop. 5.4, Prop. 5.5]{BHM}.
Define the measure $\widetilde\rho_o$ on $S^2_\infty\times S^2_\infty\backslash \Delta$ by 
$$d\widetilde\rho_o(\eta_1, \eta_2)=e^{2\delta_H B_o(\eta_1, \eta_2)}d\rho_o(\eta_1)  d\rho_o(\eta_2)$$
Then $\widetilde\rho_o$ is a $H$-invariant measure although it may not be an ergodic measure with respect to the $H$-action.
We define the measure $\widetilde\nu_H$ on $S^2_\infty\times S^2_\infty\backslash\Delta$ as 
$$d\widetilde\nu_H(\widehat\tau_o\xi_1, \widehat\tau_o\xi_2)=e^{2(\xi_1\vert \xi_2)^G_{id}}d\nu_H(\widehat\tau_o\xi_1)  d\nu_H(\widehat\tau_o\xi_2)$$
Then $\widetilde\nu_H$ is invariant under the action of $H$ by \cite[Prop. 2.2]{Kai3}. Furthermore it is an ergodic measure by Corollary \ref{cor}. Now
we suppose $\nu_H$ is equivalent to $\rho_o$ and we claim that the Radon-Nikodym derivative $d\rho_o/d\nu_H$ is $\nu_H$-essentially upper and lower bounded by positive constants.

\noindent{\it Proof of the claim} : We have a $\nu_H$-integrable function $J$ and a $\widetilde\nu_H$-measurable function $\widetilde J$ defined by $d\nu_H=Jd\rho_o$ and $d\widetilde\nu_H=\widetilde J d\widetilde \rho_o$.
Since $\widetilde J$ is positive almost everywhere, there exists a constant $C>0$ such that the set
$A:=\{(\eta_1, \eta_2)\in S^2_\infty\times S^2_\infty\backslash\Delta \vert C^{-1}\leq \widetilde J(\eta_1, \eta_2) \leq C\}$ has positive $\widetilde\nu_H$-measure.
Since $\widetilde\nu_H$ is ergodic, there exists $h\in H$ such that $(h\eta_1, h\eta_2)\in A$ for $\widetilde\nu_H$-almost every $(\eta_1, \eta_2)$. Since $\widetilde\rho_o$ is also $H$-invariant, we have $\widetilde J(\eta_1, \eta_2)=\widetilde J(h \eta_1, h\eta_2)$ and thus $A$ has the full $\widetilde\nu_H$-measure.
When $\eta_i=\widehat\tau_o\xi_i$ for $i=1,2$, we have 
$$\widetilde J(\eta_1, \eta_2)=J(\eta_1)J(\eta_2)\frac{ e^{2(\xi_1\vert \xi_2)^G_{id}}}{e^{2\delta_H B_o(\eta_1, \eta_2)}}$$
Thus we have for $\widetilde\nu_H$-almost every $(\eta_1, \eta_2)$, 
$$C^{-1}\frac{e^{2\delta_H B_o(\eta_1, \eta_2)}}{e^{2(\xi_1\vert \xi_2)^G_{id}}}\leq  J(\eta_1)J(\eta_2) \leq C\frac{e^{2\delta_H B_o(\eta_1, \eta_2)}}{ e^{2(\xi_1\vert \xi_2)^G_{id}}}$$
Assume, seeking a contradiction, $J$ is unbounded in $B_s\cap \Lambda_H \subset S^2_\infty$ where $B_s$ is a small spherical open ball in $S^2_\infty$.
We choose another small spherical open ball $B_s'$ far from $B_s$ so that for all $(\eta,\eta')\in B_s\times B_s'$,
$D^{-1}<e^{B_o(\eta, \eta')}<D$ for some constant $D>0$. 
Since $\widehat\tau_o$ is a continuous map, the distance in $\partial H$ from the open set $\widehat\tau_o^{-1}(B_s')$ to the open set $\widehat\tau_o^{-1}(B_s)$ has a positive lower bound.
If we let $\widehat\tau_o\xi=\eta$ and $\widehat\tau_o\xi'=\eta'$,
this means $(\xi' \vert \xi)_{id}$ has an upper bound and the same is true for $(\xi'\vert \xi)^G_{id}$. Thus $e^{(\xi'\vert \xi)^G_{id}}$ is positively upper and lower bounded.
From this, we get that $J(\eta)J(\eta')$ is positively upper and lower bounded for 
$\widetilde\nu_H$-almost all $(\eta, \eta')\in B_s\times B_s'$. But there is a constant $D_1>0$ such that 
$B:=\{\eta'\in B_s' \vert D_1^{-1} \leq J(\eta') \leq D_1\}$ has a positive measure so that $B_s\times B$ has a positive $\widetilde\nu_H$-measure. Since $J$ is unbounded on $B_s$, $J(\eta)J(\eta')$ cannot be essentially bounded in $B_s\times B$. This is a contradiction and we have proved our original claim.

Now we recall the change of variable formulas for conformal and harmonic measures.
$$\frac{d(h^*\rho_o)}{d\rho_o}(\eta)=e^{\delta_H b_{o,\eta}(h^{-1}\cdot o)}$$
$$\frac{d(h^*\nu_H)}{d\nu_H}(\widehat\tau_o(\xi))=e^{b^G_{id, \xi}(h^{-1})}$$
Since the density of $\rho_o$ with respect to $\nu_H$ is uniformly bounded away from zero, there exists $C_1>0$
$$\vert\esssup\limits_{\eta\in \Lambda_H} \delta_H b_{o,\eta}(h^{-1}\cdot o)- \esssup\limits_{\xi\in\partial H}b^G_{id, \xi}(h^{-1})\vert<C_1$$
By \cite[Lemma 2.5]{BHM}, there exists a constant $C_2>0$ such that
$$\vert d_G(id, h^{-1})-\esssup\limits_{\xi\in\partial H}b^G_{id, \xi}(h^{-1}) \vert<C_2$$
Here we can replace `esssup' by `sup' because every open set in $\partial H$ has a positive $\nu$-measure
and $b^G_{id, \xi}(h^{-1})$ is a continuous function with respect to $\xi$ by Lemma \ref{busemann}. 
We also have
$$\sup\limits_{\eta\in \Lambda_H}b_{o, \eta}(h^{-1}\cdot o)\leq d_{\H^3}(o, h^{-1}\cdot o)$$
by the triangle inequality.
Therefore there exist $C_3$(which may not be a positive number) such that for all $h\in H$,
$$\delta_H d_{\H^3}(o, h^{-1}\cdot o)- d_G(id, h^{-1})>C_3$$
But by Lemma \ref{exp}, there exists a sequence $\{h_n\}\subset H$ such that $\vert h_n^{-1}\vert -\delta_H d_{\H^3}(o, h_n^{-1}\cdot o)$ goes to infinity. Since the word metric is quasi-isometric to the Green metric, we have a contradiction.
\end{proof}

Note that even in the case of $\Lambda_H=S^2_\infty$, Theorem \ref{main} is not a direct consequence of \cite[Prop. 4.5]{BHM}.
In fact, \cite[Prop. 4.5]{BHM} is using \cite[Prop. 4.4]{BHM} which is valid for the case that $\tau_o$ is a quasi-isometry and $\widehat\tau_o$ is the natural homeomorphism between $\partial H$ and $S^2_\infty$. For the case that $\tau_o$ is not a quasi-isometry, we need to assume the existence of 
the continuous boundary extension $\widehat\tau_o$ of $\tau_o$.

\space Woojin Jeon\\ School of Mathematics\\
 KIAS, Hoegiro 87, Dongdaemun-gu\\
 Seoul, 130-722, Korea\\
\texttt{jwoojin\char`\@ kias.re.kr}\\

\end{document}